\documentclass[10pt,reqno]{amsart}
\usepackage{amsmath}
\usepackage{amscd,amsthm,amsfonts,amsopn,amssymb,mathrsfs}

\newtheorem{theorem}{Theorem}[section]
\newtheorem{proposition}[theorem]{Proposition}

\newtheorem{lemma}[theorem]{Lemma}
\theoremstyle{remark}

\DeclareMathOperator{\supp}{supp\,}

\pagestyle{plain}
\allowdisplaybreaks

\begin{document}

\title{An optimal decay estimate for the linearized water wave equation in 2D}
\author{Aynur Bulut}
\address{Department of Mathematics, University of Michigan, 530 Church Street, Ann Arbor, MI 48109-1043}
\email{abulut@umich.edu}
\maketitle

\begin{abstract}
We obtain a decay estimate for solutions to the linear dispersive equation $iu_t-(-\Delta)^{1/4}u=0$ for $(t,x)\in\mathbb{R}\times\mathbb{R}$.  This corresponds to a factorization of the linearized water wave equation $u_{tt}+(-\Delta)^{1/2}u=0$.  In particular, by making use of the Littlewood-Paley decomposition and stationary phase estimates, we obtain decay of order $|t|^{-1/2}$ for solutions corresponding to data $u(0)=\varphi$, assuming only bounds on $\lVert \varphi\rVert_{H_x^1(\mathbb{R})}$ and $\lVert x\partial_x\varphi\rVert_{L_x^2(\mathbb{R})}$.  As another application of these ideas, we give an extension to equations of the form $iu_t-(-\Delta)^{\alpha/2}u=0$ for a wider range of $\alpha$.
\end{abstract}

\section{Introduction}

In this note we establish decay properties for solutions to the initial value problem
\begin{align}
\left\lbrace\begin{array}{rl}iu_t(t,x)-(-\Delta)^{1/4}u(t,x)&=0,\quad (t,x)\in \mathbb{R}\times\mathbb{R},\\
u(0)&=\varphi\in \mathcal{S}(\mathbb{R}).\end{array}\right.\label{eq1}
\end{align}
where we use the notation $-\Delta=-\partial_{xx}$.

This equation arises as a factorized form of the linearized 2D water wave equation,
\begin{align}
u_{tt}+(-\Delta)^{1/2}u&=0.\label{eq2}
\end{align}
In particular, the operator $\partial_{tt}+(-\Delta)^{1/2}$ can be written as $-(i\partial_t+(-\Delta)^{1/4})(i\partial_t-(-\Delta)^{1/4})$.  Recall that the water wave system is a system of quasilinear PDEs corresponding to a free boundary problem modelling the motion of a liquid (assumed to be irrotational and inviscid) subject to gravity and sitting below a region of air; in this model, the influence of surface tension is not treated (although there has been recent progress in studying models which incorporate this effect --- see, for instance, the works \cite{CHS,ABZ,IP-CAP}).

There has been much recent progress in the study of the nonlinear water wave system in two and higher dimensions, beginning with the almost global result of Wu in \cite{W} (see also the global results for the 3D system in \cite{W2} and \cite{GMS}).  Very recently, there have been a number of results giving global existence of solutions in 2D; see the recent works of Ionescu--Pusateri \cite{IP2}, Alazard--Delort \cite{AD,AD2}, Hunter--Ifrim-Tataru \cite{JIT}, and Ifrim--Tataru \cite{IT}.  As is often the case in analyzing long-time behavior for equations having a dispersive nature, a key role in the analysis contained in each of these works is played by decay estimates.  In the context of the approach of \cite{W}, we note that while such estimates are often obtained as a consequence of an $L^1$--$L^\infty$ dispersive estimate, this is not compatible with the bootstrap procedure used to solve the nonlinear equation, which is based on $L^2$-based norms.  The decay used in the work \cite{W} is therefore obtained as the result of an application of the Klainerman vector field method.  This allows one to obtain $|t|^{-1/2}$ decay (which matches the decay rate for smooth solutions obtained from $L^1$--$L^\infty$ estimates); however, it requires the inclusion of certain vector field-based norms in the bootstrap procedure, which lead to some additional assumptions on the initial data (see \cite{JB,JB-2} for further remarks on this point).

The main result of this note is the following theorem, which gives $|t|^{-1/2}$ decay, assuming only control over $\lVert \varphi\rVert_{H_x^1(\mathbb{R})}$ and $\lVert x\partial_x \varphi\rVert_{L_x^2(\mathbb{R})}$.

\begin{theorem}
\label{thm1}
Let $\Phi:\mathbb{R}\rightarrow\mathbb{R}$ be given by $\Phi(\xi)=|\xi|^{1/2}$ for $\xi\in\mathbb{R}$.  Then there exists $C>0$ such that estimate
\begin{align}
\lVert e^{it\Phi(D)}\varphi\rVert_{L_x^{\infty}}\leq C(1+|t|)^{-1/2}\Big(\lVert \varphi\rVert_{H_x^1}+\lVert x\partial_x\varphi\rVert_{L_x^2}\Big)\label{goal}
\end{align}
holds for every $t\in\mathbb{R}$ and $\varphi\in \mathcal{S}(\mathbb{R})$, where the operator $e^{it\Phi(D)}$ is defined by
\begin{align*}
\widehat{e^{it\Phi(D)}\varphi}=e^{it\Phi(\xi)}\widehat{\varphi}\quad \textrm{for}\quad \varphi\in \mathcal{S}(\mathbb{R}).
\end{align*}
\end{theorem}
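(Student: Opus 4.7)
The plan is to split into $|t|\leq 1$ and $|t|\geq 1$, treating the large-time regime via a Littlewood--Paley decomposition combined with stationary phase on each dyadic band. For $|t|\leq 1$ the prefactor $(1+|t|)^{-1/2}$ is of order one, so it suffices to show $\|e^{it\Phi(D)}\varphi\|_{L_x^\infty}\lesssim \|\varphi\|_{H_x^1}$. Since $e^{it\Phi(D)}$ is a Fourier multiplier of modulus one, it commutes with $\partial_x$ and is an $L^2$-isometry, so the 1D Gagliardo--Nirenberg inequality $\|f\|_{L^\infty(\mathbb{R})}^2 \lesssim \|f\|_{L^2}\|\partial_x f\|_{L^2}$, applied to $f=e^{it\Phi(D)}\varphi$, yields the desired bound.

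For $|t|\geq 1$ I would fix a standard Littlewood--Paley partition $\sum_{N\in\mathbb{Z}}\psi_N(\xi)=1$ on $\mathbb{R}\setminus\{0\}$, with $\psi_N$ supported in the band $I_N=\{\xi:|\xi|\sim 2^N\}$, and write
\begin{align*}
e^{it\Phi(D)}\varphi(x) = \sum_N I_N(t,x), \qquad I_N(t,x) = \int_{\mathbb{R}} e^{i\phi(\xi)}\psi_N(\xi)\hat\varphi(\xi)\,d\xi,
\end{align*}
where $\phi(\xi)=x\xi+t|\xi|^{1/2}$. For $x\neq 0$, the equation $\phi'(\xi)=0$ has a unique solution $\xi_c$ with $|\xi_c|=t^2/(4x^2)$ and $\mathrm{sgn}(\xi_c)=-\mathrm{sgn}(x/t)$, and $|\Phi''(\xi_c)|=\tfrac{1}{4}|\xi_c|^{-3/2}$. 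For the band containing $\xi_c$ one has $|t||\Phi''(\xi_c)|\sim |t|\,2^{-3N/2}$.

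The central estimate is a dispersive bound on each band. When $\xi_c\in I_N$, the stationary phase expansion yields
\begin{align*}
|I_N(t,x)| \lesssim (|t|\,2^{-3N/2})^{-1/2}\sup_{I_N}|\hat\varphi| = |t|^{-1/2}\,2^{3N/4}\sup_{I_N}|\hat\varphi|,
\end{align*}
with remainder of lower order in $|t|^{-1}$. To control $\sup_{I_N}|\hat\varphi|$ I would apply the 1D Sobolev inequality on the interval $I_N$,
\begin{align*}
\sup_{I_N}|\hat\varphi|^2 \lesssim |I_N|^{-1}\|\hat\varphi\|_{L^2(I_N)}^2 + \|\hat\varphi\|_{L^2(I_N)}\|\partial_\xi\hat\varphi\|_{L^2(I_N)},
\end{align*}
and use the Fourier-side identity $\widehat{x\partial_x\varphi}=-\hat\varphi-\xi\partial_\xi\hat\varphi$ to obtain $\|\xi\partial_\xi\hat\varphi\|_{L^2}\leq \|x\partial_x\varphi\|_{L^2}+\|\varphi\|_{L^2}$ and hence $\|\partial_\xi\hat\varphi\|_{L^2(I_N)}\lesssim 2^{-N}M_0$, where $M_0=\|\varphi\|_{H_x^1}+\|x\partial_x\varphi\|_{L_x^2}$. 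Combined with $\|\hat\varphi\|_{L^2(I_N)}\lesssim 2^{-N}\|\varphi\|_{H^1}$ for $N\geq 0$ and $\|\hat\varphi\|_{L^2(I_N)}\leq\|\varphi\|_{L^2}$ for $N<0$, this yields $\sup_{I_N}|\hat\varphi|\lesssim \min(2^{-N},2^{-N/2})M_0$, so that $|I_N|\lesssim |t|^{-1/2}\,2^{-|N|/4}M_0$. The geometric sum in $N$ gives $|t|^{-1/2}M_0$, as required.

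The main obstacle I anticipate is justifying the stationary phase estimate rigorously and uniformly in $x$, and absorbing the bands for which $\xi_c\notin I_N$. In this non-stationary regime one has $|\phi'(\xi)|\gtrsim \max(|x|,|t|\,2^{-N/2})$ on $I_N$, and iterated integration by parts using $e^{i\phi}=(i\phi')^{-1}\partial_\xi e^{i\phi}$ produces contributions of order at most $|t|^{-1}$ times summable factors, which are strictly better than the target $|t|^{-1/2}$ and hence harmless. The transition zone $\xi_c\approx\partial I_N$ involves only $O(1)$ bands and is handled by combining the two bounds. The singularity of $\Phi$ at $\xi=0$ poses no genuine difficulty: at low frequencies $|\Phi''|$ is \emph{larger}, making the stationary-phase factor smaller, and the summability in $N<0$ is preserved as above.
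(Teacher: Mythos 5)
Your treatment of $|t|\le 1$, of the non-stationary bands, and your bound $\lVert\partial_\xi\widehat{\varphi}\rVert_{L^2(I_N)}\lesssim 2^{-N}M_0$ all run parallel to the paper (which integrates by parts once per non-stationary band, pays an $O(\log|t|)$ factor from the number of bands between $(1+|t|)^{-1}$ and $(1+|t|)$, absorbs it into $|t|^{-1/2}$, and handles frequencies outside that window by Bernstein). The genuine gap is in your central estimate on the stationary band: the bound
\begin{align*}
|I_N(t,x)|\lesssim \big(|t|\,2^{-3N/2}\big)^{-1/2}\sup_{I_N}|\widehat{\varphi}|
\end{align*}
is not a valid form of stationary phase. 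Van der Corput's lemma with a non-constant amplitude $a=\psi_N\widehat{\varphi}$ requires the total variation $\lVert\partial_\xi a\rVert_{L^1(I_N)}$ in addition to $\sup|a|$, and a genuine ``stationary phase expansion'' with a lower-order remainder requires more derivatives of $a$ than you possess ($\widehat{\varphi}$ has only one $\xi$-derivative, in a weighted $L^2$ sense). Under your own hypotheses, Cauchy--Schwarz gives $\lVert\partial_\xi a\rVert_{L^1(I_N)}\lesssim 2^{N/2}\cdot 2^{-N}M_0=2^{-N/2}M_0$, which for $N\ge 0$ exceeds your bound $\sup_{I_N}|\widehat{\varphi}|\lesssim 2^{-N}M_0$ by a factor $2^{N/2}$. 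The corrected van der Corput estimate therefore yields only $|I_N|\lesssim|t|^{-1/2}2^{3N/4}\cdot 2^{-N/2}M_0=|t|^{-1/2}2^{N/4}M_0$ on the stationary band $2^{N}\sim|t/x|^2$; for $|x|\sim|t|^{1/2}$ this band sits at $2^N\sim|t|$, still below the high-frequency cutoff, and the bound degrades to $|t|^{-1/4}M_0$. The argument as written does not close.

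This is precisely the difficulty the paper's proof is built to overcome. On the $O(1)$ stationary bands it performs a \emph{second} dyadic decomposition in $|\xi-\xi_0|\sim 2^{\ell}$: the innermost piece $|\xi-\xi_0|\lesssim 2^{\ell_0}$ is bounded by $2^{\ell_0}\lVert\widehat{P_k\varphi}\rVert_{L^\infty_\xi}$ (with the sup norm controlled by Lemma \ref{lem2}, itself requiring an interpolation argument), each outer annulus is integrated by parts once using $|Q'_{t,x}|\gtrsim |t|2^{\ell-3k/2}$ there --- so that the derivative of the amplitude enters only through $\lVert\partial_\xi\widehat{P_k\varphi}\cdot\psi_\ell(\cdot-\xi_0)\rVert_{L^1}\lesssim 2^{\ell/2}\lVert\partial_\xi\widehat{P_k\varphi}\rVert_{L^2}$, gaining over the crude $2^{N/2}$ factor --- and the cutoff is then optimized via $2^{\ell_0}\sim 2^{3k/4}|t|^{-1/2}$. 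To repair your proof you would need to reproduce some version of this two-scale argument (or otherwise exploit cancellation in $\partial_\xi\widehat{\varphi}$ beyond its $L^1$ norm on the band); a single-scale van der Corput bound is not sufficient at this level of regularity of the data.
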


We also refer to the recent work of Beichman \cite{JB-2} (see also \cite{JB}), where an interesting class of decay estimates with a lower order rate of decay for (\ref{eq1}) and (\ref{eq2}) is obtained; see Proposition 3.4 and Theorem 4.7 of \cite{JB-2}.  The arguments in \cite{JB-2} are based on a reduced form of the Klainerman vector field method, which avoids the need to invoke the additional vector field present in \cite{W}.

The proof of Theorem $\ref{thm1}$ is given in Section $2$.  In Section $3$, we describe how the estimates used in this argument can be applied to establish a class of similar results for certain classes of operators in the scale $i\partial_t-(-\Delta)^{\alpha/2}$, $\alpha\in\mathbb{R}$, $\alpha\neq 1$.

\vspace{0.1in}

{\noindent \bf Acknowledgments}: The author is grateful to Sijue Wu for helpful discussions, and to the anonymous referee for careful reading and for very valuable comments on the manuscript.  The author was partially supported by NSF Grant DMS-1361838.

\section{Proof of the main result}
\label{sec-ip-est}

In this section, we give the proof of Theorem $\ref{thm1}$, the decay estimate for $e^{it\Phi(D)}\varphi$.  The proof of Theorem $\ref{thm1}$ is closely related to a linear estimate obtained by Ionescu and Pusateri in their study of the global existence problem for the full water wave equation \cite{IP1,IP2} (see, for instance, \cite[Lemma 2.3]{IP1}), where the relevant tools are the Littlewood-Paley decomposition and the method of stationary phase.  

We begin by specifying some brief notational conventions.  For $f\in \mathcal{S}(\mathbb{R})$, we will use $\widehat{f}$ to denote its Fourier transform (in all of our calculations we will slightly abuse notation in the interest of brevity by omitting all factors of $2\pi$).  The notation $A\lesssim B$ shall mean, as usual, that there exists a constant $C>0$ such that $A\leq CB$ holds.  Moreover, we shall use the abbreviations $\partial_x=\frac{d}{dx}$ and $\partial_\xi=\frac{d}{d\xi}$.

We use the usual Littlewood-Paley projection operators $P_k$, $k\in\mathbb{Z}$, acting on functions $f:\mathbb{R}\rightarrow\mathbb{C}$.  In particular, let $\psi\in C_c^\infty(\mathbb{R})$ be given such that $0\leq \psi(x)\leq 1$ for $x\in\mathbb{R}$, $\supp \psi\subset (-2,2)$, $\psi(x)=1$ for $|x|\leq 1$, and $\psi(x)=\psi(-x)$ for $x\in\mathbb{R}$.  For each $k\geq 1$, one then defines $\psi_k:\mathbb{R}\rightarrow \mathbb{R}$ by $\psi_k(x)=\psi(x/2^k)-\psi(x/2^{k-1})$.  The operators $P_k$, $k\in\mathbb{Z}$, are then defined via 
\begin{align*}
\widehat{P_kf}(\xi)=\psi_k(\xi)\widehat{f}(\xi).
\end{align*}

We recall the relevant Bernstein inequalities for these operators (that is, inequalities exploiting frequency localization to insert/remove derivatives and to move between $L^p$ norms).  In particular, for every $1\leq p,q\leq\infty$ with $p\leq q$, we have
\begin{align}
\lVert P_kg\rVert_{L_x^q(\mathbb{R})}\lesssim 2^{k(\frac{1}{p}-\frac{1}{q})}\lVert P_kg\rVert_{L_x^p(\mathbb{R})}\label{bern}
\end{align}
and
\begin{align}
\lVert P_kg\rVert_{L_x^p(\mathbb{R})}\sim 2^{-sk}\lVert (-\Delta)^{s/2}P_kg\rVert_{L_x^p(\mathbb{R})}\label{bern2}
\end{align}
for all $g\in \mathcal{S}(\mathbb{R})$.

We now state two technical lemmas which give estimates for $\widehat{P_k\varphi}$ in terms of the norms appearing on the right side of the desired decay estimate ($\ref{goal}$).
\begin{lemma}
\label{lem1}
Let the operators $P_k$, $k\in\mathbb{Z}$, be as defined in the beginning of Section $2$.  We then have
\begin{align}
\lVert \partial_\xi\widehat{P_k\varphi}\rVert_{L_\xi^2}&\lesssim 2^{-k}(\lVert \varphi\rVert_{L_x^2}+\lVert x\partial_x\varphi\rVert_{L_x^2}).\label{eq-weight-bernstein}
\end{align}
for all $k\in\mathbb{Z}$ and $\varphi\in \mathcal{S}(\mathbb{R})$.
\end{lemma}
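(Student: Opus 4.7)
\medskip

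\noindent\textbf{Proof plan.} The plan is to apply the product rule to $\widehat{P_k\varphi}(\xi) = \psi_k(\xi)\widehat{\varphi}(\xi)$, obtaining
\[
\partial_\xi \widehat{P_k\varphi}(\xi) = \psi_k'(\xi)\widehat{\varphi}(\xi) + \psi_k(\xi)\widehat{\varphi}'(\xi),
\]
and to bound each of the two resulting pieces separately, exploiting the fact that $\psi_k$ is supported in the dyadic annulus $|\xi|\sim 2^k$.

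The first piece is routine: since $\psi_k$ is the dilation of a fixed bump by $2^k$, one has $\lVert \psi_k'\rVert_{L^\infty_\xi}\lesssim 2^{-k}$, so Plancherel gives $\lVert \psi_k'\widehat{\varphi}\rVert_{L^2_\xi}\lesssim 2^{-k}\lVert \varphi\rVert_{L^2_x}$.

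The second piece, $\psi_k\widehat{\varphi}'$, is the delicate one, because the hypothesis provides control of $\lVert x\partial_x\varphi\rVert_{L^2_x}$ rather than $\lVert x\varphi\rVert_{L^2_x} = \lVert \widehat{\varphi}'\rVert_{L^2_\xi}$. The key algebraic identity is
\[
\widehat{x\partial_x\varphi}(\xi) = -\partial_\xi\bigl(\xi\widehat{\varphi}(\xi)\bigr) = -\widehat{\varphi}(\xi) - \xi\widehat{\varphi}'(\xi),
\]
which rearranges to $\xi\widehat{\varphi}'(\xi) = -\widehat{x\partial_x\varphi}(\xi) - \widehat{\varphi}(\xi)$. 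Since $\psi_k$ vanishes near $\xi = 0$, the multiplier $\psi_k(\xi)/\xi$ is well-defined and bounded in $L^\infty_\xi$ by $O(2^{-k})$, so after multiplying the rearranged identity by $\psi_k(\xi)/\xi$ and applying Plancherel we obtain
\[
\lVert \psi_k \widehat{\varphi}'\rVert_{L^2_\xi} \lesssim 2^{-k}\bigl(\lVert x\partial_x\varphi\rVert_{L^2_x} + \lVert \varphi\rVert_{L^2_x}\bigr).
\]
Combining the two estimates yields (\ref{eq-weight-bernstein}).

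The main (mild) obstacle is precisely this mismatch between the weighted norm supplied by the hypothesis and the one that would arise naturally by differentiating $\widehat{\varphi}$ on the Fourier side; the dyadic frequency localization of $\psi_k$ is exactly the mechanism that lets us trade the missing derivative for a factor of $\xi^{-1}$, producing the clean gain of $2^{-k}$.
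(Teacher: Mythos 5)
Your proposal is correct and follows essentially the same route as the paper: product rule on $\psi_k\widehat{\varphi}$, the bound $\lVert\psi_k'\rVert_{L^\infty}\lesssim 2^{-k}$ for the first piece, and for the second piece the observation that $|\xi|\sim 2^k$ on $\supp\psi_k$ lets you trade a factor of $\xi^{-1}$ to reach $\xi\widehat{\varphi}'=-\widehat{\varphi}-\widehat{x\partial_x\varphi}$ (the paper phrases this as $\lVert\xi\partial_\xi\widehat{\varphi}\rVert_{L^2}=\lVert\partial_x[ix\varphi]\rVert_{L^2}$ via Plancherel, which is the same identity). No gaps.
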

\begin{proof}
Fix $k\in\mathbb{Z}$, and write
\begin{align*}
\nonumber 2^k\lVert \partial_\xi\widehat{P_k\varphi}\rVert_{L_\xi^2}&\leq 2^k\lVert (\partial_\xi \psi_k)(\xi)\widehat{\varphi}(\xi)\rVert_{L_\xi^2}+2^k\lVert \psi_k(\xi)\partial_\xi\widehat{\varphi}\rVert_{L_\xi^2}.
\end{align*}
Recalling that $\partial_\xi\psi_k(\xi)=2^{-k}\psi'(\xi/2^k)-2^{-(k-1)}\psi'(\xi/2^{k-1})\lesssim 2^{-k}$, and noting that $|\xi|\sim 2^k$ for $\xi\in \supp \psi_k$, this quantity is then bounded by a multiple of
\begin{align*}
\lVert \widehat{\varphi}\rVert_{L_\xi^2}+\lVert \xi\partial_\xi\widehat{\varphi}\rVert_{L_\xi^2}&=\lVert \varphi\rVert_{L_x^2}+\Big\lVert \partial_x\Big[ ix\varphi(x)\Big]\Big\rVert_{L_x^2}
\end{align*}
where we have used the Plancherel identity to obtain the equality.  The desired inequality ($\ref{eq-weight-bernstein}$) now follows immediately.
\end{proof}

\begin{lemma}
\label{lem2}
For every $s\in \mathbb{R}$ with $\frac{1}{2}<s<1$ there exists $C=C(s)>0$ such that
\begin{align}
\lVert \widehat{P_k\varphi}\rVert_{L_\xi^\infty}&\leq C\left(\lVert P_k\varphi\rVert_{L_x^2}+2^{-sk}(\lVert \varphi\rVert_{L_x^2}+\lVert x\partial_x\varphi\rVert_{L_x^2})\right)\label{eq-further-claim}
\end{align}
for all $k\in \mathbb{Z}$ and $\varphi\in \mathcal{S}(\mathbb{R})$.
\end{lemma}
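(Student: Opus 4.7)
The plan is to split into two cases depending on the sign of $k$, applying Lemma \ref{lem1} in both but exploiting different features of the frequency localization. In each regime one of the two endpoint restrictions on $s$ is used in an essential way.

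In the high-frequency case $k \geq 0$, I would apply the standard one-dimensional Gagliardo--Nirenberg inequality $\lVert f\rVert_{L_\xi^\infty}^2 \leq 2\lVert f\rVert_{L_\xi^2}\lVert \partial_\xi f\rVert_{L_\xi^2}$ (which follows from $f(\xi)^2 = 2\int_{-\infty}^\xi f\,\partial_\eta f\,d\eta$ and Cauchy--Schwarz) to $f = \widehat{P_k\varphi}$. Combining this with Plancherel and Lemma \ref{lem1} yields
\[
\lVert \widehat{P_k\varphi}\rVert_{L_\xi^\infty}^2 \lesssim \lVert P_k\varphi\rVert_{L_x^2}\cdot 2^{-k}\bigl(\lVert \varphi\rVert_{L_x^2}+\lVert x\partial_x\varphi\rVert_{L_x^2}\bigr),
\]
and an AM--GM step on the right-hand side gives
\[
\lVert \widehat{P_k\varphi}\rVert_{L_\xi^\infty} \lesssim \lVert P_k\varphi\rVert_{L_x^2} + 2^{-k}\bigl(\lVert \varphi\rVert_{L_x^2}+\lVert x\partial_x\varphi\rVert_{L_x^2}\bigr).
\]
Since $s<1$ and $k\geq 0$, $2^{-k}\leq 2^{-sk}$, producing (\ref{eq-further-claim}).

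In the low-frequency case $k<0$, I would instead use that $\widehat{P_k\varphi}=\psi_k\widehat\varphi$ is supported in an interval of length at most $2^{k+2}$. Writing $\widehat{P_k\varphi}(\xi)=\int_{-\infty}^{\xi}\partial_\eta\widehat{P_k\varphi}(\eta)\,d\eta$ and applying Cauchy--Schwarz (the integrand being supported on a set of measure $\lesssim 2^k$) gives
\[
\lVert \widehat{P_k\varphi}\rVert_{L_\xi^\infty} \lesssim 2^{k/2}\lVert \partial_\xi \widehat{P_k\varphi}\rVert_{L_\xi^2} \lesssim 2^{-k/2}\bigl(\lVert \varphi\rVert_{L_x^2}+\lVert x\partial_x\varphi\rVert_{L_x^2}\bigr),
\]
invoking Lemma \ref{lem1} in the last step. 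Since $s>1/2$ and $k<0$, one has $2^{-k/2}\leq 2^{-sk}$, which concludes this case.

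The main obstacle, in my view, is recognizing the necessity of the case split together with the roles of the two constraints on $s$. A single Gagliardo--Nirenberg estimate gives only $\lVert \widehat{P_k\varphi}\rVert_{L_\xi^\infty}\lesssim \lVert P_k\varphi\rVert_{L_x^2}^{1/2}\cdot 2^{-k/2}(\cdots)^{1/2}$, which after Young produces the factor $2^{-k}$ that is too large for $k<0$; conversely the support-based estimate produces $2^{-k/2}$, which is too large for $k>0$ when $s>1/2$. The hypothesis $s\in(1/2,1)$ is precisely what allows these two complementary estimates to be dominated by a single $2^{-sk}$ on their respective frequency ranges.
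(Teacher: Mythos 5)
Your proof is correct, and it takes a genuinely different and more elementary route than the paper. The paper obtains a single estimate valid for all $k$ at once: it first uses the Sobolev embedding $H_\xi^s\hookrightarrow L_\xi^\infty$ together with the interpolation $\lVert \widehat{P_k\varphi}\rVert_{H_\xi^s}\lesssim \lVert P_k\varphi\rVert_{L_x^2}+\lVert P_k\varphi\rVert_{L_x^2}^{1-s}\lVert \partial_\xi\widehat{P_k\varphi}\rVert_{L_\xi^2}^{s}$, and then feeds in the reverse bound $\lVert P_k\varphi\rVert_{L_x^2}\lesssim 2^k\lVert\partial_\xi\widehat{P_k\varphi}\rVert_{L_\xi^2}$ (itself derived via H\"older on the $O(2^k)$-measure support, a Sobolev embedding in $\xi$, and interpolation) to produce exactly the factor $2^{k(1-s)}$, after which Lemma \ref{lem1} gives $2^{-sk}$. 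You instead replace the fractional Sobolev/interpolation machinery by two endpoint estimates --- the Gagliardo--Nirenberg inequality for $k\geq 0$ and the fundamental theorem of calculus plus Cauchy--Schwarz on the $O(2^k)$-measure support for $k<0$ --- and use the sign of $k$ to dominate the resulting factors $2^{-k}$ and $2^{-k/2}$ by $2^{-sk}$; this is where the two constraints $s<1$ and $s>\tfrac12$ enter, exactly as you observe. Your second case is essentially the paper's reverse bound in disguise (combined with the trivial $L^\infty$--$L^2$ step), but your overall argument avoids fractional derivatives in $\xi$ entirely and even yields a constant independent of $s$; the price is the case split and the loss of the sharp intermediate power $2^{k(1-s)}\lVert\partial_\xi\widehat{P_k\varphi}\rVert_{L_\xi^2}$, which is harmless here since only the final form (\ref{eq-further-claim}) is used downstream.
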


\begin{proof}
Fix $s>\frac{1}{2}$, and use the (one-dimensional) Sobolev inequality followed by interpolation to obtain
\begin{align}
\lVert \widehat{P_k\varphi}\rVert_{L_\xi^\infty}&\lesssim \lVert \widehat{P_k\varphi}\rVert_{H_\xi^s}\lesssim \lVert P_k\varphi\rVert_{L_x^2}+\lVert P_k\varphi\rVert_{L_x^2}^{1-s}\lVert \partial_\xi\widehat{P_k\varphi}\rVert_{L_\xi^2}^s.\label{eq-further-0}
\end{align}

We now observe that, by the Plancherel identity followed by H\"older's inequality (recalling that $\widehat{P_k\varphi}$ is supported on a set of measure $O(2^k)$) and the Sobolev embedding,
\begin{align}
\lVert P_k\varphi\rVert_{L_x^2}&=\lVert \widehat{P_k\varphi}\rVert_{L_\xi^2}\lesssim 2^{k/4}\lVert \widehat{P_k\varphi}\rVert_{L_\xi^4}\lesssim 2^{k/4}\lVert (-\Delta_{\xi})^{1/8}\widehat{P_k\varphi}\rVert_{L_\xi^2}.\label{eq-further-1}
\end{align}
Interpolation then shows that the right side of ($\ref{eq-further-1}$) is bounded by
\begin{align*}
2^{k/4}\lVert P_k\varphi\rVert_{L_x^2}^{3/4}\lVert (-\Delta_\xi)^{1/2}\widehat{P_k\varphi}\rVert_{L_\xi^2}^{1/4}\lesssim 2^{k/4}\lVert P_k\varphi\rVert_{L_x^2}^{3/4}\lVert \partial_\xi\widehat{P_k\varphi}\rVert_{L_\xi^2}^{1/4},
\end{align*}
so that we obtain
\begin{align*}
\lVert P_k\varphi\rVert_{L_x^2}&\lesssim 2^k\lVert \partial_\xi\widehat{P_k\varphi}\rVert_{L_x^2}.
\end{align*}

We now use this inequality to estimate the right side of ($\ref{eq-further-0}$).  In particular, this gives the bound
\begin{align*}
\lVert\widehat{P_k\varphi}\rVert_{L_\xi^\infty}&\lesssim \lVert P_k\varphi\rVert_{L_x^2}+2^{k(1-s)}\lVert \partial_\xi\widehat{P_k\varphi}\rVert_{L_\xi^2}.
\end{align*}
Application of Lemma $\ref{lem1}$ now gives ($\ref{eq-further-claim}$) as desired.
\end{proof}

Having obtained Lemma $\ref{lem1}$ and Lemma $\ref{lem2}$, we now give the proof of Theorem $\ref{thm1}$.

\begin{proof}[Proof of Theorem $\ref{thm1}$]
We argue as in \cite{IP1}.  Let $\varphi\in\mathcal{S}(\mathbb{R})$ and $t\in \mathbb{R}$ be given.  Since the embedding $H_x^1(\mathbb{R})\hookrightarrow L_x^\infty(\mathbb{R})$ implies $\lVert e^{it\Phi(D)}\varphi\rVert_{L_x^\infty}\lesssim \lVert \varphi\rVert_{H_x^1(\mathbb{R})}$, the desired inequality is immediate for all $|t|\leq 1$.  We therefore assume $|t|\geq 1$ and fix an arbitrary point $x\in\mathbb{R}$.  Then, invoking the Littlewood-Paley decomposition (with notation as in the beginning of Section $2$), we bound $|e^{it\Phi(D)}\varphi(x)|$ by
\begin{align*}
&\sum_{\substack{k\in\mathbb{Z}\\2^k\leq \lambda(t)}} \Big| (P_ke^{it\Phi(D)}\varphi)(x)\Big|+\sum_{\lambda(t)\leq 2^k\leq \Lambda(t)} \Big| (P_ke^{it\Phi(D)}\varphi)(x)\Big|\\
&\hspace{0.4in}+\sum_{2^k\geq \Lambda(t)} \Big| (P_ke^{it\Phi(D)}\varphi)(x)\Big|=:(A)+(B)+(C),
\end{align*}
where we have set $\lambda(t)=2^{10}(1+|t|)^{-1}$ and $\Lambda(t)=2^{-10}(1+|t|)$, 

The terms (A) and (C) are estimated identically as in \cite{IP1}.  In particular, the Bernstein inequality ($\ref{bern}$) gives
\begin{align*}
&(A)\leq \sum_{2^k\leq \lambda(t)} \lVert P_ke^{it\Phi(D)}\varphi\rVert_{L_x^\infty}\lesssim \sum_{2^k\leq \lambda(t)} 2^{k/2}\lVert P_ke^{it\Phi(D)}\varphi\rVert_{L_x^2}\\
&\hspace{0.4in}\lesssim\lambda(t)^{1/2}\lVert \varphi\rVert_{L_x^2}\lesssim (1+|t|)^{-1/2}\lVert \varphi\rVert_{L_x^2}.
\end{align*}

Similarly, application of ($\ref{bern}$) and ($\ref{bern2}$) gives
\begin{align*}
(C)&\leq \sum_{2^k\geq \Lambda(t)} 2^{k/2}\lVert P_k\varphi\rVert_{L_x^2}\lesssim \sum_{2^k\geq \Lambda(t)} 2^{-k/2}\lVert \varphi\rVert_{H_x^1}\lesssim (1+|t|)^{-1/2}\lVert \varphi\rVert_{H_x^1}.
\end{align*}

For the estimate of (B), we further decompose the sum into three parts, corresponding to summations over
\begin{align*}
I_1:=\{k:\lambda(t)\leq 2^k\leq \Lambda(t)\,\, \textrm{and}\,\, 2^{k/2}\leq |t/x|/16\},
\end{align*}
\begin{align*}
I_2:=\{k:\lambda(t)\leq 2^k\leq \Lambda(t)\,\, \textrm{and}\,\, |t/x|/16\leq 2^{k/2}\leq 16|t/x|\},
\end{align*}
and
\begin{align*}
I_3:=\{k:\lambda(t)\leq 2^k\leq \Lambda(t)\,\, \textrm{and}\,\, 2^{k/2}\geq 16|t/x|\}.
\end{align*}

The contributions of $I_1$ and $I_3$ to $(B)$ are again estimated identically as in \cite{IP1}, leading to the bounds
\begin{align}
(B)_{j}\lesssim |t|^{-1/2}(\lVert \varphi\rVert_{L_x^2}+\lVert x\partial_x\varphi\rVert_{L_x^2}),\quad j=1,3,\label{eq-i1-0}
\end{align}
where
\begin{align}
(B)_{j}:=\sum_{k\in I_j}\bigg|\int_{\mathbb{R}} e^{i(x\xi+t\Phi(\xi))}\widehat{P_k\varphi}(\xi)d\xi\bigg|,\quad j=1,2,3,\label{eq-i1}
\end{align}
are the contributions of $I_1$, $I_2$, and $I_3$ to $(B)$.  In order to give a complete presentation we give the argument for $j=1$; the estimate for $j=3$ follows from identical considerations.  Note that integration by parts gives
\begin{align}
(B)_{1}&\leq \sum_{k\in I_1}\bigg(\frac{1}{|t|}\int_{\mathbb{R}} \frac{|\partial_\xi\widehat{P_k\varphi}(\xi)|}{|\frac{x}{t}+\Phi'(\xi)|} d\xi + \int_{\mathbb{R}}\frac{|t\Phi''(\xi)|}{|x+t\Phi'(\xi)|^2}|\widehat{P_k\varphi}(\xi)| d\xi\bigg)\label{eq-b1}
\end{align}

We fix $k\in I_1$ and estimate each term in the above sum.  Note that, since $k\in I_1$ implies $|x/t|\leq 2^{-k/2}/16$, the identity $|\Phi'(\xi)|=\frac{1}{2}|\xi|^{-1/2}$ gives 
\begin{align}
|\frac{x}{t}+\Phi'(\xi)|\geq |\Phi'(\xi)|-\Big|\frac{x}{t}\Big|&\geq \frac{1}{2}|\xi|^{-1/2}-\frac{2^{-k/2}}{16},\label{eq-rhs1}
\end{align}
and it follows that for all $\xi\in \supp \psi_k$ (which corresponds to $2^{k}\leq |\xi|\leq 2^{k+1}$), one has
\begin{align}
(\ref{eq-rhs1})\geq \frac{1}{2}\bigg(\frac{2^{-k/2}}{\sqrt{2}}\bigg)-\frac{2^{-k/2}}{16}\gtrsim 2^{-k/2}.\label{eq-rhs2}
\end{align}
Applying Cauchy-Schwarz and noting that the measure of $\supp \psi_k$ is bounded by a multiple of $2^k$, the first term in the parentheses in ($\ref{eq-b1}$) is bounded by a multiple of 
\begin{align*}
|t|^{-1}2^{k/2}\lVert \partial_\xi\widehat{P_k\varphi}(\xi)\rVert_{L_\xi^1}\lesssim |t|^{-1}2^{k}\lVert \partial_\xi\widehat{P_k\varphi}\rVert_{L_\xi^2}.
\end{align*}
Applying Lemma $\ref{lem1}$, this is bounded by $|t|^{-1}(\lVert \varphi\rVert_{L_x^2}+\lVert x\partial_x \varphi\rVert_{L_x^2})$.

We next estimate the second term in the parentheses in ($\ref{eq-b1}$), for which we invoke a similar argument.  Recall that $k\in I_1$ is fixed.  Using ($\ref{eq-rhs1}$)--($\ref{eq-rhs2}$) once again, we obtain
\begin{align}
\nonumber \int_{\mathbb{R}} \frac{|t||\Phi''(\xi)|}{|x+t\Phi'(\xi)|^2}|\widehat{P_k\varphi}(\xi)|d\xi&=\frac{1}{|t|}\int_{\mathbb{R}} \frac{|\xi|^{-3/2}}{|\frac{x}{t}+\Phi'(\xi)|^2}|\widehat{P_k\varphi}(\xi)|d\xi\\
&\leq \frac{2^k}{|t|}\int_{\mathbb{R}} |\xi|^{-3/2}|\widehat{P_k\varphi}(\xi)|d\xi,\label{eq-term2}
\end{align}
where we have used the computation $|\Phi''(\xi)|=\frac{1}{4}|\xi|^{-3/2}$.  Recalling again that $\xi\in \supp \psi_k$ implies $|\xi|\geq 2^k$, and invoking Cauchy-Schwarz as before, we obtain the bound
\begin{align*}
(\ref{eq-term2})&\lesssim \frac{1}{2^{k/2}|t|}\lVert \widehat{P_k\varphi}\rVert_{L_x^1}\lesssim |t|^{-1}\lVert P_k\varphi\rVert_{L_x^2}\lesssim |t|^{-1}\lVert \varphi\rVert_{L_x^2}
\end{align*}

To complete the estimate, we must now take the sum over $k\in I_1$.  In particular, since $I_1$ is contained in $\{k:\lambda(t)\leq 2^k\leq \Lambda(t)\}$, this set has at most $O(\log|t|)$ elements, and we obtain
\begin{align*}
(B)_{1}&\lesssim |t|^{-1}\sum_{k\in I_1}(\lVert \varphi\rVert_{L_x^2}+\lVert x\partial_x\varphi\rVert_{L_x^2})\\
&\lesssim |t|^{-1}\log|t|(\lVert \varphi\rVert_{L_x^2}+\lVert x\partial_x\varphi\rVert_{L_x^2})\\
&\lesssim |t|^{-1/2}(\lVert \varphi\rVert_{L_x^2}+\lVert x\partial_x\varphi\rVert_{L_x^2}),
\end{align*}
as desired.

It remains to estimate the contribution of $I_2$ to $(B)$.  As is done in \cite{IP1}, we apply the method of stationary phase.  In particular, for each $k\in I_2$, set $Q_{t,x}(\xi)=x\xi+t\Phi(\xi)$ and write
\begin{align}
\bigg|\int_{\mathbb{R}} e^{iQ_{t,x}(\xi)}\widehat{P_k\varphi}(\xi)d\xi\bigg|&=\bigg|\int_{|\xi|\in [2^{k},2^{k+1}]} e^{iQ_{t,x}(\xi)}\psi_k(\xi)\widehat{\varphi}(\xi)d\xi\bigg|.\label{eq-i2-1}
\end{align}
It follows from explicit computation of $\Phi'$ that there exists a unique $\xi_0\in \mathbb{R}$ with $Q_{t,x}'(\xi_0)=0$ (in fact, one has $|\xi_0|=\frac{1}{4}|\frac{t}{x}|^2$).

Fix a parameter $\ell_0\in\mathbb{Z}$ to be chosen later in the argument.  Then,
\begin{align}
\nonumber (\ref{eq-i2-1})&\leq \bigg|\int_{\{\xi:2^{k}\leq |\xi|\leq 2^{k+1}\}} e^{iQ_{t,x}(\xi)}\psi_k(\xi)\widehat{\varphi}(\xi)\psi(\tfrac{\xi-\xi_0}{2^{\ell_0}})d\xi\bigg|\\
&\hspace{0.2in}+\sum_{\ell=\ell_0+1}^\infty \bigg|\int_{\{\xi:2^{k}\leq |\xi|\leq 2^{k+1}\}} e^{iQ_{t,x}(\xi)}\psi_k(\xi)\widehat{\varphi}(\xi)\psi_\ell(\xi-\xi_0)d\xi\bigg|.\label{eq-i2-1b}
\end{align}

Fix $s>\frac{1}{2}$.  Then Lemma $\ref{lem2}$ implies that the first term in ($\ref{eq-i2-1b}$) is bounded by
\begin{align}
\lVert \psi_k\widehat{\varphi}\rVert_{L_\xi^\infty}\lVert \psi\Big(\tfrac{\xi-\xi_0}{2^{\ell_0}}\Big)\rVert_{L_\xi^1}&\lesssim 2^{\ell_0}\lVert P_k\varphi\rVert_{L_x^2}+2^{\ell_0-sk}(\lVert \varphi\rVert_{L_x^2}+\lVert x\partial_x\varphi\rVert_{L_x^2}).\label{eq-firstterm}
\end{align}

We now turn our attention to the subsequent terms.  Let $\ell\geq \ell_0+1$ be given.  Then, integrating by parts,
\begin{align}
\nonumber &\bigg|\int e^{iQ_{t,x}(\xi)}\psi_k(\xi)\widehat{\varphi}(\xi)\psi_{\ell}(\xi-\xi_0)d\xi\bigg|\\
\nonumber &\hspace{0.2in}\leq \bigg\lVert \frac{(\partial_\xi \widehat{P_k\varphi})(\xi)\psi_{\ell}(\xi-\xi_0)}{|Q'_{t,x}(\xi)|}\bigg\rVert_{L_\xi^1}+\bigg\lVert \frac{(\widehat{P_k\varphi})(\xi)\partial_{\xi}[\psi_\ell(\xi-\xi_0)]}{|Q'_{t,x}(\xi)|}\bigg\rVert_{L_\xi^1}\\
&\hspace{0.4in}+\bigg\lVert \frac{(\widehat{P_k\varphi})(\xi)(\psi_{\ell})(\xi-\xi_0)}{|Q'_{t,x}(\xi)|^2}Q''_{t,x}(\xi)\bigg\rVert_{L_\xi^1}\label{eq-i2-2}
\end{align}

Define $q_0(k,\ell):=\inf\{|Q'_{t,x}(\xi)|:\xi\in \supp \psi_k, \xi-\xi_0\in \supp \psi_\ell\}$.  Let $\xi\in \supp \psi_k$ be given satisfying $\xi-\xi_0\in\supp \psi_\ell$.  We then have $|\xi|\sim 2^k$, $|\xi-\xi_0|\sim 2^\ell$, and thus
\begin{align}
|t|^{-1}2^{(3k/2)-\ell}|Q'_{t,x}(\xi)|\sim \frac{1}{|\xi-\xi_0|}\left|\frac{x}{t}|\xi|^{3/2}+\frac{\xi}{2}\right|\label{identity}
\end{align}
on this set.  The right side of this expression is bounded from below by
\begin{align*}
\frac{1}{|\xi-\xi_0|}\left(\left|\frac{x}{t}\right||\xi|^{3/2}-\frac{1}{2}|\xi|\right)&\geq \frac{|\xi|}{|\xi|+|\xi_0|}\left(\frac{16}{2^{k/2}}|\xi|^{1/2}-\frac{1}{2}\right)\gtrsim \frac{|\xi|}{|\xi|+|\xi_0|}
\end{align*}
where in the first inequality we have recalled that we are working under the condition $k\in I_2$, and in the second inequality we have used $|\xi|\sim 2^k$.  Noting that under the current hypotheses on $\xi$ and $\xi_0$, 
\begin{align*}
\frac{|\xi|}{|\xi|+|\xi_0|}\gtrsim \frac{2^k}{2^k+\frac{1}{4}|t/x|^2}\gtrsim \frac{2^k}{2^k+\frac{(16)^2}{4}2^k},
\end{align*}
we conclude that $(\ref{identity})\gtrsim 1$, and thus $$q_0(k,\ell)\gtrsim |t|2^{\ell-\frac{3k}{2}}.$$

Combining this estimate with Cauchy-Schwarz and explicit calculation of $|\Phi''(\xi)|$, we obtain the bound
\begin{align*}
(\ref{eq-i2-2})&\lesssim \frac{2^{3k/2}}{|t|2^{\ell}}\bigg(\lVert \partial_\xi \widehat{P_k\varphi}(\xi)\rVert_{L_{\xi}^2}\lVert \psi_\ell\rVert_{L_\xi^2}+\frac{1}{2^\ell}\lVert \widehat{P_k\varphi}\rVert_{L_\xi^\infty}\lVert \psi'(\tfrac{\xi-\xi_0}{2^\ell})-\psi'(\tfrac{\xi-\xi_0}{2^{\ell-1}})\rVert_{L_\xi^1}\\
&\hspace{1.2in}+\frac{1}{2^\ell}\lVert \widehat{P_k\varphi}\rVert_{L_\xi^\infty}\lVert \psi_\ell\rVert_{L_\xi^1}\bigg).
\end{align*}
A change of variables in the $L^1$ norms shows that this expression is bounded by
\begin{align*}
\frac{2^{3k/2}}{|t|2^{\ell}}\bigg(\lVert \partial_\xi \widehat{P_k\varphi}(\xi)\rVert_{L_{\xi}^2}\lVert \psi_\ell\rVert_{L_\xi^2}+\lVert \widehat{P_k\varphi}\rVert_{L_\xi^\infty}\bigg).
\end{align*}

We now apply Lemma $\ref{lem1}$ and Lemma $\ref{lem2}$.  Application of these results gives
\begin{align*}
&\frac{2^{k/2}}{|t|2^{\ell/2}}\Big(\lVert \varphi\rVert_{L_x^2}+\lVert x\partial_x\varphi\rVert_{L_x^2}\Big)\\
&\hspace{0.4in}+ \frac{2^{3k/2}}{|t|2^{\ell}}\bigg(\lVert P_k\varphi\rVert_{L_x^2}+2^{-sk}\Big(\lVert \varphi\rVert_{L_x^2}+\lVert x\partial_x\varphi\rVert_{L_x^2}\Big)\bigg).
\end{align*}
Taking the summation in $\ell$ and recalling the estimate ($\ref{eq-firstterm}$) for the first term in ($\ref{eq-i2-1b}$), we obtain
\begin{align*}
(\ref{eq-i2-1})&\lesssim \Big(2^{\ell_0}+\frac{2^{3k/2}}{|t|2^{\ell_0}}\Big)\lVert P_k\varphi\rVert_{L_x^2}+\Big(2^{\ell_0-sk}+\frac{2^{k/2}}{|t|2^{\ell_0/2}}+\frac{2^{k(\frac{3}{2}-s)}}{|t|2^{\ell_0}}\Big)\Big(\lVert \varphi\rVert_{L_x^2}+\lVert x\partial_x\varphi\rVert_{L_x^2}\Big).
\end{align*}
Setting $s=\frac{3}{4}$ and choosing $\ell_0$ to satisfy $2^{\ell_0}\sim 2^{3k/4}/|t|^{1/2}$, this becomes
\begin{align*}
(\ref{eq-i2-1})&\lesssim \frac{2^{3k/4}}{|t|^{1/2}}\lVert P_k\varphi\rVert_{L_x^2}+\Big(\frac{1}{|t|^{1/2}}+\frac{2^{k/8}}{|t|^{3/4}}\Big)\Big(\lVert \varphi\rVert_{L_x^2}+\lVert x\partial_x\varphi\rVert_{L_x^2}\Big)\\
&\lesssim |t|^{-1/2}\lVert \varphi\rVert_{H_x^{3/4}}+(|t|^{-1/2}+2^{k/8}|t|^{-3/4})(\lVert \varphi\rVert_{L_x^2}+\lVert x\partial_x \varphi\rVert_{L_x^2})
\end{align*}

Recalling that $k\in I_2$ implies $2^k\leq \Lambda(t)\lesssim |t|$, and taking the summation over all $k\in I_2$, we obtain (since the number of elements of $I_2$ is bounded by an absolute constant; this can be seen by recalling that $k\in I_2$ implies $-8+2\log(|t/x|)\leq k\leq 8+2\log(|t/x|)$)
\begin{align}
(B)_{2}&\lesssim (|t|^{-1/2}+|t|^{-5/8})(\lVert \varphi\rVert_{H_x^{3/4}}+\lVert x\partial_x\varphi\rVert_{L_x^2}).\label{eq-b2}
\end{align}

Combining the estimates for $(A)$, $(C)$, and $(B)_1$--$(B)_3$, we obtain
\begin{align}
|(e^{it\Phi(D)}\varphi)(x)|&\lesssim |t|^{-1/2}(\lVert \varphi\rVert_{H_x^1}+\lVert x\partial_x\varphi\rVert_{L_x^2})\label{eq-claim2}
\end{align}
for $|t|\geq 1$.  We have therefore established (\ref{goal}) as desired.
\end{proof}

\section{Concluding remarks}

The argument described in this paper is also applicable to a wider range of operators in the scale $i\partial_t-(-\Delta)^{\alpha/2}$, $\alpha\in\mathbb{R}\setminus \{1\}$ (for which the treatment above corresponds to $\alpha=1/2$).  

We give an example of such an  argument in the range $\frac{1}{3}<\alpha<\frac{1}{2}$.  Indeed, in this range, after fixing $M\geq 1$ sufficiently large and replacing the partition of indices $I_1$--$I_3$ with
\begin{align}
\nonumber I'_1&:=\{k\in\mathbb{Z}:\lambda(t)\leq 2^k\leq \Lambda(t), 2^{k(1-\alpha)}\leq M^{-1}|t/x|\},\\
\nonumber I'_2&:=\{k:\lambda(t)\leq 2^k\leq \Lambda(t), M^{-1}|t/x|\leq 2^{k(1-\alpha)}\leq M|t/x|\},\quad\textrm{and}\\
I'_3&:=\{k:\lambda(t)\leq 2^k\leq \Lambda(t), 2^{k(1-\alpha)}\geq M|t/x|\},\label{i-prime}
\end{align}
we are led to the following proposition:
\begin{proposition}
Fix $\frac{1}{3}<\alpha<\frac{1}{2}$, and let $\Phi=\Phi_\alpha:\mathbb{R}\rightarrow\mathbb{R}$ be given by $\Phi_\alpha(\xi)=|\xi|^\alpha$ for $\xi\in\mathbb{R}$.  Then there exists $C=C(\alpha)>0$ such that the estimate
\begin{align}
\lVert e^{it\Phi_\alpha(D)}\varphi\rVert_{L_x^\infty}\leq C(1+|t|)^{-1/2}(\lVert\varphi\rVert_{H_x^1}+\lVert x\partial_x\varphi\rVert_{L_x^2})\label{eq-goal-prop}
\end{align}
holds for all $\varphi\in \mathcal{S}(\mathbb{R})$.
\end{proposition}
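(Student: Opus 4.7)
The plan is to mirror the proof of Theorem~\ref{thm1}, replacing $\Phi$ by $\Phi_\alpha$ and the partition $\{I_1,I_2,I_3\}$ by $\{I'_1,I'_2,I'_3\}$ from (\ref{i-prime}). For $|t|\leq 1$ the estimate is immediate from $H^1_x \hookrightarrow L^\infty_x$, so I assume $|t|\geq 1$. The Littlewood--Paley decomposition $(A)+(B)+(C)$ with thresholds $\lambda(t)\sim |t|^{-1}$ and $\Lambda(t)\sim |t|$ goes through unchanged; the estimates for $(A)$ and $(C)$ via Bernstein's inequality yield $|t|^{-1/2}$ decay identically as in Section~\ref{sec-ip-est}, with no dependence on $\alpha$. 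The key new input is that on $\supp \psi_k$ one has $|\Phi'_\alpha(\xi)|=\alpha|\xi|^{\alpha-1}\sim 2^{-k(1-\alpha)}$ and $|\Phi''_\alpha(\xi)|=\alpha(1-\alpha)|\xi|^{\alpha-2}\sim 2^{k(\alpha-2)}$, which is precisely the scaling motivating the cutoffs comparing $2^{k(1-\alpha)}$ with $|t/x|$ in the definition of $I'_1$--$I'_3$.

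For $k\in I'_1\cup I'_3$, the defining inequalities combined with the scaling of $\Phi'_\alpha$ yield $|x/t+\Phi'_\alpha(\xi)|\gtrsim 2^{-k(1-\alpha)}$ on $\supp \psi_k$ once $M$ is taken sufficiently large. Performing the integration by parts (\ref{eq-b1}) verbatim, applying Cauchy--Schwarz on $\supp \psi_k$ (of measure $\sim 2^k$), and invoking Lemma~\ref{lem1} bounds each summand by a constant multiple of
\begin{align*}
|t|^{-1}\,2^{k(1/2-\alpha)}\bigl(\lVert \varphi\rVert_{L^2_x}+\lVert x\partial_x\varphi\rVert_{L^2_x}\bigr).
\end{align*}
Since $1/2-\alpha>0$, the sum over $k$ is a geometric series controlled by its largest term, which together with $2^k\lesssim \Lambda(t)\sim|t|$ produces the bound $|t|^{-1/2-\alpha}\leq |t|^{-1/2}$ without any logarithmic loss. (Note that no analogue of the $\log|t|$ absorption of Section~\ref{sec-ip-est} is needed here.)

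For $k\in I'_2$ I would carry out the stationary-phase argument of Section~\ref{sec-ip-est}. The equation $Q'_{t,x}(\xi_0)=0$ has a unique solution of size $|\xi_0|=(\alpha|t/x|)^{1/(1-\alpha)}\sim 2^k$ under the $I'_2$ assumption, and I would split (\ref{eq-i2-1}) into the contribution from $|\xi-\xi_0|\lesssim 2^{\ell_0}$ and a dyadic sum over shells $|\xi-\xi_0|\sim 2^\ell$, $\ell>\ell_0$. The analogue of (\ref{identity}), together with $|\Phi''_\alpha|\sim 2^{k(\alpha-2)}$, gives $|Q'_{t,x}(\xi)|\gtrsim |t|\,2^{k(\alpha-2)+\ell}$ on each shell. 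Integrating by parts once, summing in $\ell$, and applying Lemma~\ref{lem1} and Lemma~\ref{lem2} with parameter $s\in(1-\alpha/2,1)$ produces, before optimization, a bound of the form
\begin{align*}
\Bigl(2^{\ell_0}+\tfrac{2^{k(2-\alpha)-\ell_0}}{|t|}\Bigr)\lVert P_k\varphi\rVert_{L^2_x}+\Bigl(2^{\ell_0-sk}+\tfrac{2^{k(1-\alpha)}}{|t|\,2^{\ell_0/2}}+\tfrac{2^{k(2-\alpha)-\ell_0-sk}}{|t|}\Bigr)\bigl(\lVert \varphi\rVert_{L^2_x}+\lVert x\partial_x\varphi\rVert_{L^2_x}\bigr).
\end{align*}
Balancing the two $\lVert P_k\varphi\rVert_{L^2_x}$ terms by setting $2^{\ell_0}\sim 2^{k(1-\alpha/2)}/|t|^{1/2}$, and using $2^k\leq \Lambda(t)\lesssim|t|$ together with the fact that $I'_2$ contains only $O(1)$ indices, leads to the bound $(|t|^{-1/2}+|t|^{-1/4-3\alpha/4})(\lVert \varphi\rVert_{H^1_x}+\lVert x\partial_x\varphi\rVert_{L^2_x})$ for the $I'_2$ contribution.

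The hard part will be identifying the origin of the constraint $\alpha>1/3$: it is exactly the condition ensuring that $|t|^{-1/4-3\alpha/4}\leq |t|^{-1/2}$ at the worst admissible scale $2^k\sim \Lambda(t)\sim|t|$. This offending term arises from the contribution $\tfrac{2^{k(1-\alpha)}}{|t|\,2^{\ell_0/2}}$ produced by the differentiation of $\widehat{P_k\varphi}$ in the single integration by parts, which couples the scaling of $\Phi'_\alpha$ against the cost of resolving the stationary point via $\ell_0$. Once this is handled, combining with the estimates for $(A)$, $(C)$, and $(B)_1$, $(B)_3$ established above yields (\ref{eq-goal-prop}) for all $|t|\geq 1$, completing the argument.
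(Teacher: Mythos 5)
Your proposal follows essentially the same route as the paper's sketch: the same Littlewood--Paley decomposition with the index sets $I'_1$--$I'_3$, the same kernel lower bound $|x/t+\Phi_\alpha'(\xi)|\gtrsim 2^{-k(1-\alpha)}$ giving the summand $|t|^{-1}2^{k(1/2-\alpha)}$ and the bound $|t|^{-1/2-\alpha}$ for $(B)_1$ and $(B)_3$, the same stationary-phase estimate $q_0(k,\ell)\gtrsim |t|\,2^{\ell-(2-\alpha)k}$, the same choice $2^{\ell_0}\sim 2^{(2-\alpha)k/2}/|t|^{1/2}$, and the same final exponent $-1/4-3\alpha/4$ producing the constraint $\alpha>1/3$. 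The one small slip is writing $s\in(1-\alpha/2,1)$: your displayed bound (and the paper) requires $s=1-\alpha/2$ exactly, since for $2^k<1$ a strictly larger $s$ costs a factor $|t|^{s-1+\alpha/2}$ in the terms $2^{\ell_0-sk}$ and $2^{k(2-\alpha-s)}/(|t|2^{\ell_0})$; this endpoint is admissible in Lemma~\ref{lem2} because $1-\alpha/2\in(3/4,5/6)\subset(1/2,1)$.
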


Since the proof consists essentially of ``{\it mutatis mutandis}'' adjustments of the proof of Theorem $\ref{thm1}$ above, we will only record the final chain of estimates involved in the argument.

\begin{proof}[Sketch of proof]
The argument follows the outline used in the proof of Theorem $\ref{thm1}$; in particular, fixing $\alpha<\frac{1}{2}$, the quantities (A) and (C) are estimated identically as in that argument (using the Bernstein inequalities (\ref{bern}) and (\ref{bern2}), and it remains to estimate the quantities $(B)_1$--$(B)_3$ (with $\Phi$ replaced with $\Phi_\alpha$ and the index sets $I_1$--$I_3$ replaced with $I'_1$--$I'_3$ given in (\ref{i-prime})).  For simplicity, we retain the notation $(B)_1$--$(B)_3$ for these modified quantities.

For this, we note that estimates on the integration kernel (namely, $|\frac{x}{t}+\Phi'(\xi)|\gtrsim 2^{-k(1-\alpha)}$ for $\xi\in\supp\psi_k$ and $k\in I'_1$ or $k\in I'_3$) lead to the bounds
\begin{align*}
(B)_1&\lesssim |t|^{-1}\sum_{k\in I_1} 2^{k(\frac{1}{2}-\alpha)}(\lVert \varphi\rVert_{L_x^2}+\lVert x\partial_x\varphi\rVert_{L_x^2})\\
&\lesssim |t|^{-1}\Lambda(t)^{\frac{1}{2}-\alpha}(\lVert\varphi\rVert_{L_x^2}+\lVert x\partial_x\varphi\rVert_{L_x^2})\\
&\lesssim |t|^{-\frac{1}{2}-\alpha}(\lVert\varphi\rVert_{L_x^2}+\lVert x\partial_x\varphi\rVert_{L_x^2})
\end{align*}
and, likewise,
\begin{align*}
(B)_2&\lesssim |t|^{-\frac{1}{2}-\alpha}(\lVert \varphi\rVert_{L_x^2}+\lVert x\partial_x\varphi\rVert_{L_x^2}).
\end{align*}

It remains to estimate $(B)_2$.  We continue to emulate the procedure used in the proof of Theorem $\ref{thm1}$, for which the bound on the analogue of $q_0(k,\ell)$ is 
\begin{align*}
q_0(k,\ell)\gtrsim |t|2^{\ell-(2-\alpha)k}.
\end{align*}
This leads to the estimate of the quantity corresponding to (\ref{eq-i2-1}) by
\begin{align*}
&\bigg(2^{\ell_0}+\frac{2^{(2-\alpha)k}}{|t|2^{\ell_0}}\bigg)\lVert P_k\varphi\rVert_{L_x^2}\\
&\hspace{0.2in}+\bigg(2^{\ell_0-sk}+\frac{2^{(1-\alpha)k}}{|t|2^{\ell_0/2}}+\frac{2^{(2-\alpha-s)k}}{|t|2^{\ell_0}}\bigg)(\lVert\varphi\rVert_{L_x^2}+\lVert x\partial_x\varphi\rVert_{L_x^2}.
\end{align*}
where $\ell_0\geq 1$ and $\frac{1}{2}<s<1$ are fixed parameters.  Choosing $s=\frac{2-\alpha}{2}$ and $\ell_0$ such that $2^{\ell_0}\sim 2^{(2-\alpha)k/2}/|t|^{1/2}$, we obtain a bound of
\begin{align*}
\frac{2^{(2-\alpha)k/2}}{|t|^{1/2}}\lVert P_k\varphi\rVert_{L_x^2}+\bigg(\frac{1}{|t|^{1/2}}+\frac{2^{(2-3\alpha)k/4}}{|t|^{3/4}}\bigg)(\lVert \varphi\rVert_{L_x^2}+\lVert x\partial_x\varphi\rVert_{L_x^2}).
\end{align*}

We now conclude the estimate by again appealing to the estimate $2^k\lesssim \Lambda(t)\lesssim |t|$ and taking summation over $k\in I_2$.  This gives
\begin{align}
(B)_2&\lesssim (|t|^{-1/2}+|t|^{\frac{2-3\alpha}{4}-\frac{3}{4}})(\lVert \varphi\rVert_{H_x^{(2-\alpha)/2}}+\lVert x\partial_x\rVert_{L_x^2}).\label{eq-b2-2}
\end{align}
Since the $H_x^{(2-\alpha)/2}$ norm is bounded by the $H_x^1$ norm, (\ref{eq-b2-2}) implies the desired estimate (\ref{eq-goal-prop}) provided that $\alpha$ satisfies $\frac{2-3\alpha}{4}-\frac{3}{4}<-\frac{1}{2}$; that is, when $\alpha>\frac{1}{3}$.  This completes the proof of the proposition.
\end{proof}

\end{document}